\documentclass[11pt]{amsart}
\usepackage[left=1in, right=1in, top=1in]{geometry}
\usepackage[toc,page]{appendix}
\usepackage[utf8]{inputenc}
\usepackage{amsthm}
\usepackage{amsmath}
\usepackage{amsfonts}
\usepackage{tabularray}
\usepackage{hyperref}

\usepackage{tikz}
\usetikzlibrary{matrix, calc}

\UseTblrLibrary{amsmath}

\newtheorem{theorem}{Theorem}[section]
\newtheorem{lemma}[theorem]{Lemma}
\newtheorem*{AL}{Augmentation Lemma}

\title{On Generalized Pfaffians}

\begin{document}

\author{Jacques Distler}
\address{University of Texas at Austin, Austin, TX 78712, USA}
\email{distler@golem.ph.utexas.edu}
\author{Nathan Donagi}
\address{Lower Merion High School, Ardmore, PA 19003, USA}
\email{ndonagi@gmail.com}
\author{Ron Donagi}
\address{University of Pennsylvania, Philadelphia, PA 19104, USA}
\email{Donagi@penn.edu}

\date{\today}

\begin{abstract}
The determinant of a $2N \times 2N$ anti-symmetric matrix $g$ is the square of its Pfaffian, which like the determinant is a polynomial in the entries of $g$. Studies of certain super conformal field theories (`of class S’) suggested a conjectural generalization of this, predicting that each of a series of other polynomials in the entries of $g$ also admit polynomial square roots. Among other consequences, this conjecture led to a characterization of the local Hitchin image for type D. Several important special cases had been established previously. In this paper we prove the conjecture in full.
\end{abstract}

\rightline{UT-WI-15-2024}
\maketitle

\thispagestyle{empty}
\newpage
 
\section{Introduction}

As is well-known, the determinant of an $2N \times 2N$ anti-symmetric matrix $g$ is the square of its Pfaffian, which like the determinant is a polynomial in the entries of $g$. This was conjecturally generalized in [2], Conjecture 1. The starting point is the choice of a nilpotent anti-symmetric matrix $X\in \mathfrak{so}(2N)$ and consideration of the expression

\[det(sX + tg + \lambda I)\]

\noindent which is a polynomial in the entries of $g\in\mathfrak{so}(2N)$ and the variables $s, t, \lambda$. The determinant $\det(g)$ occurs as the coefficient of $t^{2N}$,
and equals the square of the Pfaffian. The conjecture is that certain other coefficients also have polynomial square roots. In more detail:

A D-partition is a partition where all even parts appear with even multiplicity. Nilpotent elements of $\mathfrak{so}(2N)$, up to conjugacy, are labeled by D-partitions of $2N$ (with the proviso that, when $N$ is even, a partition consisting of only even parts corresponds to two \emph{distinct} nilpotent orbits).

A special D-partition is a partition where there is an even number of odd parts between any 2 even parts or at the beginning or end of the partition.

In this paper we will consider all special D-partitions to be ordered from greatest to least: 

\[p_1 \geq p_2 \geq \dots \geq p_{2m}.\]

\noindent We follow the convention for matrices in $\mathfrak{so}(2N)$ from \cite{CollingwoodMcGovern}. A matrix is in $\mathfrak{so}(2N)$ if it is of the form:

\[\begin{bmatrix}A& B\\ C& -A^t\end{bmatrix}\]

\noindent where A is an $N \times N$ matrix and B and C are anti-symmetric. This form is conjugate (over $\mathbb{C}$) to an anti-symmetric matrix and therefore the determinant can be calculated in either form.

Given a special D-partition, $P$, with a nilpotent $X\in \mathfrak{so}(2N)$ in the corresponding nilpotent orbit and a matrix $g\in\mathfrak{so}(2N)$, define $c_{2k_i}$ such that:

\[charpol(x+tg) := det(X+tg+\lambda I) = \lambda^{2N}+\sum_{k=1}^{N} t^{\chi_{2k}}(c_{2k}+ O(t))\lambda^{2(N-k)}.\]

\noindent Here $\chi_{2k}$ is the lowest power of $t$ that multiplies $\lambda^{2(N-k)}$. It is given by the following recipe. For each $j$, define the partial sum
\[
   k_j =  \sum_{i=1}^j p_i.
\]
Then $\chi_{k}=j$, where $j$ is given by demanding
\[
k_{j-1}<k\leq k_{j}.
\]
Note that the condition that $P$ be special ensures that $k_j$ is even when $j$ is even.

The conjecture then states that for every \emph{even} $j$ such that $p_{j}>p_{j+1}$,  $c_{k_{j}}$ is the square of a polynomial on the Lie algebra.

A case where this conjecture is known is when $j=2m$, in which case $c_{2N}$ is the square of the Pfaffian.

This Conjecture was used in \cite{Balasubramanian:2023iyx} in several different ways. The subject of that paper was the local image of the Hitchin map in type-D. Given a nilpotent, $X$, this image was a certain subspace of the $c_{2k}$'s, characterized by a certain set of ``constraints.'' The constraints came in two types, \emph{even-type constraints} (Theorem 2 of \cite{Balasubramanian:2023iyx}) and \emph{odd-type constraints} (Theorem 3 of \cite{Balasubramanian:2023iyx}). In each case, the characterization of the local Hitchin image relied on the fact that certain of the $c_{2k}$'s could be written as squares of polynomial functions on the Lie algebra. The authors of \cite{Balasubramanian:2023iyx} were able to prove certain special cases of the Conjecture (see Proposition 1 of \cite{Balasubramanian:2023iyx}), but the general case eluded them. We fill that gap here.

\smallskip

\section{Proof}

To begin, we will homogenize the characteristic polynomial, by multiplying $X$ by a scalar $s$. Though we are only really interested in the case where $s=1$, this will make the upcoming logic clearer.

After homogenization, the determinant \[det(sX+tg+\lambda I)\] will be a homogeneous polynomial of degree $2N$.

Now for a specific $j$ and $2k$, our $c_{2k}$ arises as the coefficient of \[s^{2k-j} t^j \lambda^{2(N-k)}.\]

\noindent A prescription for generating the nilpotent $X$ from a partition $P$ was given in \cite{CollingwoodMcGovern} (see also \cite{Chacaltana:2011ze}).

Here we give a simplified version of said prescription, ignoring numerical coefficients.

Take a D-partition $[p_1,p_2...p_{2m}]$ and divide it into pairs of the form $[r,r]$ and $[2a+1, 2b+1] (a>b)$. label the pairs $q_1$, $q_2$, \dots, $q_m$.

We take $X$ to be in block diagonal form, consisting of blocks $X^\alpha$, $\alpha = 1,\dots,m$ The $\alpha$th block depends on the type of $q_\alpha$. 

First let $E_{i,j}$ be the $2n \times 2n$ matrix (where 2n is the size of the current block) with a $1$ in the $(i, j)th$ position and zeroes everywhere else.

Then let

\[X^-_{i,j}=E_{i,j}-E_{j+N,i+N}\]

\noindent and

\[X^+_{i,j}=E_{i,j+N}-E_{j,i+N}\] for $i<j$

If $q_\alpha$ is a pair $(r,r)$ then the $\alpha$th block is

\[\sum_{k=1}^{r-1}X^-_{k,k+1}.\]

\noindent We define the positive minor of this block to be its upper left $r \times r$ corner, and its negative minor to be its lower right $r \times r$ corner.

Otherwise, if the $\alpha$th block is of the form $(2a+1, 2b+1)$ the full formula is in \cite{CollingwoodMcGovern}, however we only need the case where $b=0$ in which case the formula is 

\[\sum_{k=1}^{s}X^-_{k,k+1}+X^+_{s,s+1}.\]

As an example, consider the partition $[4,4,3,1]$. First split it into $2$ pairs, $(4,4), (3,1)$. The pair $(4,4)$ yields the minors:

\[
\begin{bmatrix}
0 & 1 & 0 & 0\\
0 & 0 & 1 & 0\\
0 & 0 & 0 & 1\\
0 & 0 & 0 & 0\\
\end{bmatrix}
\]

\noindent and

\[
\begin{bmatrix}
0 & 0 & 0 & 0\\
-1 & 0 & 0 & 0\\
0 & -1 & 0 & 0\\
0 & 0 & -1 & 0\\
\end{bmatrix}
\]

\noindent which combine to the matrix:

\[
\begin{bmatrix}
0 & 1 & 0 & 0 & 0 & 0 & 0 & 0\\
0 & 0 & 1 & 0 & 0 & 0 & 0 & 0\\
0 & 0 & 0 & 1 & 0 & 0 & 0 & 0\\
0 & 0 & 0 & 0 & 0 & 0 & 0 & 0\\
0 & 0 & 0 & 0 & 0 & 0 & 0 & 0\\
0 & 0 & 0 & 0 & -1 & 0 & 0 & 0\\
0 & 0 & 0 & 0 & 0 & -1 & 0 & 0\\
0 & 0 & 0 & 0 & 0 & 0 & -1 & 0\\
\end{bmatrix}
\]

\noindent and the pair $(3,1)$ yields only a single minor/matrix:

\[
\begin{bmatrix}
0 & 1 & 0 & 1\\
0 & 0 & -1 & 0\\
0 & 0 & 0 & 0\\
0 & 0 & -1 & 0\\
\end{bmatrix}
\]

\noindent The nilpotent $X$ is then given by the combined matrix:

\[
\begin{+bmatrix}
0 & 1 & 0 & 0 & 0 & 0 & 0 & 0 & 0 & 0 & 0 & 0\\
0 & 0 & 1 & 0 & 0 & 0 & 0 & 0 & 0 & 0 & 0 & 0\\
0 & 0 & 0 & 1 & 0 & 0 & 0 & 0 & 0 & 0 & 0 & 0\\
0 & 0 & 0 & 0 & 0 & 0 & 0 & 0 & 0 & 0 & 0 & 0\\
0 & 0 & 0 & 0 & 0 & 1 & 0 & 0 & 0 & 0 & 0 & 1\\
0 & 0 & 0 & 0 & 0 & 0 & 0 & 0 & 0 & 0 & -1 & 0\\
0 & 0 & 0 & 0 & 0 & 0 & 0 & 0 & 0 & 0 & 0 & 0\\
0 & 0 & 0 & 0 & 0 & 0 & -1 & 0 & 0 & 0 & 0 & 0\\
0 & 0 & 0 & 0 & 0 & 0 & 0 & -1 & 0 & 0 & 0 & 0\\
0 & 0 & 0 & 0 & 0 & 0 & 0 & 0 & -1 & 0 & 0 & 0\\
0 & 0 & 0 & 0 & 0 & 0 & 0 & 0 & 0 & 0 & 0 & 0\\
0 & 0 & 0 & 0 & 0 & 0 & 0 & 0 & 0 & 0 & -1 & 0
\end{+bmatrix}
\]

\smallskip

In the case where all parts of $P$ are even, there are actually 2 separate orbits, one follows the above prescription, one does not. However the conjecture was already proved for this special case in \cite{Balasubramanian:2023iyx} and therefore the other orbit will be ignored in this paper.

\smallskip

A partition $P'=[p'_1, p'_2, \dots, p'_{m'}]$ is called an augmentation of $P=[p_1, p_2, \dots, p_{m}]$ if $p'_i=p_i$ for $i=1,\dots,m$

\smallskip

\begin{AL}
let $P'$ be an augmentation of $P$, then the conjecture for $P'$ implies the conjecture for $P$.
\end{AL}
\begin{proof}
The Lie algebra $\mathfrak{so}(2N)$ is a sub-algebra of $\mathfrak{so}(2N')$. The determinant for $\mathfrak{so}(2N)$ is the restriction of the determinant for $\mathfrak{so}(2N')$. Clearly, the restriction of a square is a square. This shows that all $c_{2k}$, $k<m$ are still squares over the Lie algebra. In the case where $k=m$ it's already known $c_{2m}$ is the square of the Pfaffian.
\end{proof}

We introduce a new structure we call a “Lego set”, which is comprised of “Lego blocks”.

A Lego block comes in two types:

A Type I Lego block consists of a single number $r$ and a sign $\pm$. These must be added in $\pm$ pairs of the same $r$. The associated minor is the minor with matching sign of the partition that generated the Lego block.

A Type II Lego block consists of a pair of numbers $(2r-1,1)$. The associated minor is the matrix of the pair that generated the block.

A Lego set is sorted by the highest element in each Lego block. From any partition we can generate a Lego set by adding extra pairs $(1,1)$ and using the augmentation lemma.

To convert a partition to a Lego set, first take each $(2r,2r)$ pair and convert it into two Type I Lego blocks of the same $2r$ and opposite signs. For each $(2a+1, 2b+1)$ pair, first add a $(1,1)$ pair to the end of the partition, and then add two type II Lego blocks to the Lego set, ($2a+1,1)$ and $(2b+1, 1)$.

For an example, consider the partition $[7,5,2,2]$, according to the prescription above it gets broken into $(7,5)$ and $(2,2)$. Then from it we get the Lego set $(7,1), (5,1), (2+), (2-)$.

Now we introduce the notion of eliminating a row and column. This is whenever we select a specific non zero entry from $X$ to contribute a power $s$ to the term we look for. Therefore that specific $s$, and its specific contribution to the determinant, will never involve any contributions from other entries in the same row or column, and therefore we say that row and that column are eliminated.

We say a Lego block is 'tapped' if at least one occurrence of $s$ in a monomial of the determinant comes from an $s$ inside the associated minor.

\setcounter{theorem}{0}
\begin{lemma}
\label{oneT}
When a Lego block is tapped, every monomial that occurs must have at least one $t$. 
\end{lemma}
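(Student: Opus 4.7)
The plan is to exploit the nilpotency of $X$ by specializing $t=0$ in the homogenized characteristic polynomial. Since $X$ is nilpotent, so is $sX$ for every scalar $s$; all its eigenvalues vanish, and consequently
\[
\det(sX + \lambda I) = \lambda^{2N}.
\]

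The left-hand side is just the specialization of $\det(sX+tg+\lambda I)$ at $t=0$, and the right-hand side has $s$-degree zero. Thus in the full characteristic polynomial no monomial with $t$-degree zero can have strictly positive $s$-degree: every monomial of positive $s$-degree carries at least one factor of $t$. A monomial that taps a given Lego block is, by definition, one whose factorization contains at least one $s$-contribution from an entry of $X$ inside the associated minor. Such a monomial has $s$-degree at least one and hence, by the previous sentence, $t$-degree at least one, which is the conclusion of the lemma.

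I do not anticipate a significant obstacle; the whole argument collapses to the single observation that the $t=0$ specialization kills the $s$-contribution of the nilpotent $X$. Should the lemma instead be intended as a statement about individual terms in the Leibniz expansion $\det = \sum_{\sigma}\mathrm{sgn}(\sigma)\prod_i c_i$ with $c_i \in \{sX_{i,\sigma(i)},\,tg_{i,\sigma(i)},\,\lambda\delta_{i,\sigma(i)}\}$ rather than about collected monomials, the same conclusion follows from a combinatorial variant: the support digraph of $X$ is acyclic inside each Lego block, since the two minors of a Type I block are disjoint directed paths and the explicit formula for a Type II block admits a topological ordering of its indices. No non-trivial cycle of the permutation $\sigma$ can then be traced using only $X$-entries, so any term carrying an $s$-factor must close its cycle using $g$, supplying the required $t$.
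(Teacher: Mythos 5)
Your primary argument (specialize $t=0$, use nilpotency of $X$ to get $\det(sX+\lambda I)=\lambda^{2N}$) does establish the lemma as literally worded: any surviving monomial of positive $s$-degree must carry a $t$. But it is a global statement — one $t$ in the whole monomial, no matter how many blocks are tapped — and that is too weak for the way the lemma is used immediately afterwards, where the paper needs ``for \emph{each} tapped Lego block, at least one row/column \emph{of that block} must contribute a $t$,'' so that tapping more than $j$ blocks forces $t$-degree greater than $j$. The $t=0$ specialization cannot see this per-block localization (and, relatedly, the notion of ``tapped'' refers to the provenance of an $s$ in a Leibniz term, which the collected-polynomial argument does not engage with). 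So on its own the first route leaves a genuine gap relative to the lemma's role in the proof.

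Your fallback combinatorial argument is the one that actually does the job, and it is essentially a cleaner, uniform version of the paper's proof: the paper argues block-locally that a Type I block is a single Jordan block and that for a Type II block each of the four admissible choices of $s$-entries fails to give a full-rank configuration, whereas you observe once and for all that the support digraph of $X$ inside each block is acyclic (disjoint directed paths for Type I, a topological order for Type II), so no permutation cycle can be closed by $X$-entries alone. To make it fully serve the subsequent counting, add one sentence: since $X$ is block diagonal, a cycle containing an $X$-entry of a given block either stays in that block (impossible by acyclicity if it is all $X$-entries) or must leave it through a $g$-entry whose \emph{row} lies in that block; rows of distinct blocks are disjoint, so distinct tapped blocks contribute distinct factors of $t$. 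With that remark your combinatorial variant proves exactly the statement the paper needs, by essentially the same block-local mechanism as the paper's own proof but with a more transparent justification of the Type II case.
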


\begin{proof}
For type I, since there is only one Jordan block, the claim is clear. For type II, there is one pair of $s$ that share a row and one that share a column. From each pair at most one $s$ can be chosen, leaving 4 possibilities. In each of the possibilities it is clear that no symmetric minor has full rank.
\end{proof}

\begin{lemma}
There is a unique optimal way to select the rows and columns that are eliminated from a Lego block. (This means that only one row and one column from the associated minor contributes a $t$, the smallest possible number of such.)
\end{lemma}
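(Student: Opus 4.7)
The plan is to split the argument according to the two types of Lego block and to inspect the associated minor directly in each case. For a Type I block of size $r$, the associated $r \times r$ minor has $s$-entries only at the superdiagonal positions $(k,k+1)$ for $k=1,\ldots,r-1$ in the $+$ case (and at the transposed positions in the $-$ case). A selection of eliminated rows and columns corresponds to choosing a subset $S \subseteq \{1,\ldots,r-1\}$ of these entries to use as $s$: this removes the rows indexed by $S$ and the columns indexed by $S+1$, so that the residual submatrix must be completed by $\lambda$-entries on its diagonal together with $t$-entries elsewhere, and a diagonal position $(i,i)$ is available as a $\lambda$ exactly when $i\notin S\cup(S+1)$. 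I would verify by direct inspection that the maximal selection $S=\{1,\ldots,r-1\}$ leaves only row $r$ and column $1$ unmatched, forcing a single $t$-entry at the off-diagonal position $(r,1)$, and that it is the unique choice of $S$ consistent with extracting the maximum allowed $s$-power from the block.

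For a Type II block coming from a pair $(2r-1,1)$, the associated minor of size $2r$ contains the $s$-entries of two Jordan chains of length $r$ that share a common top row and a common column. Each chain gives a maximal $s$-matching of size $r$, and I would check that each such choice leaves a residual submatrix admitting a matching with exactly one off-diagonal $t$-entry together with the diagonal $\lambda$-entries, with the residual $t$-entry landing at the same position for both chains. A direct sign computation using the definitions of $X^+$ and $X^-$ then shows that the two superficially distinct maximal matchings contribute the same signed monomial to the determinant, so that as a contribution the optimal selection is unique.

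The main obstacle is the Type II verification that the two maximal matchings collapse to the same contribution: this amounts to comparing the signs of two length-$r$ cycles, one along each chain, and can be handled cleanly by exploiting the reflection symmetry exchanging the two chains, which reduces the check to a short explicit sign computation. The Type I case, by contrast, follows from a direct inspection of the chain structure: any non-maximal choice of $S$ either introduces a gap, leaving a diagonal position unavailable and forcing an additional $t$-entry in the residual matching, or extracts strictly fewer $s$'s than the maximal selection and so fails to be optimal.
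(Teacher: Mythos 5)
Your Type I argument is essentially the paper's: the $s$-entries of the positive (or negative) minor pairwise avoid sharing rows and columns, so the unique maximal selection takes all of them, leaving exactly one row and one column of the minor un-eliminated. One caution even here: the forced $t$ need not sit at the position $(r,1)$ inside the block --- the leftover row and column are generically paired with entries of $tg$ anywhere in the big matrix (in the paper's $[5,3,1,1]$ example the $t$'s cross between blocks); the lemma only asserts that exactly one row and one column \emph{of the minor} contribute a $t$.

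The genuine gap is in your Type II analysis, and it matters for every block $(2r-1,1)$ with $r>2$. The associated $2r\times 2r$ minor has $2r$ $s$-entries: the superdiagonal chain $(k,k+1)$, $k=1,\dots,r-1$, its negative-transpose chain $(r+1+k,\,r+k)$, and the two corner entries $(r-1,2r)$, $(r,2r-1)$ coming from $X^+_{r-1,r}$. The only conflicts are the pair sharing row $r-1$ and the pair sharing column $2r-1$, so a maximum selection has $2r-2$ entries, obtained by discarding one entry from each conflicting pair; there are four such selections, of which only two leave a diagonal $\lambda$ available, and those two eliminate the same rows and columns (leaving the first column and the $(r+1)$-st row). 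Your proposal instead identifies the two candidate optimal selections with ``two chains of length $r$.'' That is correct only in the smallest case $r=2$ (the $(3,1)$ block, which is the example drawn in the paper): for $r>2$ a single chain of $r$ entries is not even a maximal matching (entries of the other chain can still be added without conflict), and, more importantly, it extracts only $r$ powers of $s$ from the block rather than the required $2r-2$. Since the downstream counting argument needs each tapped block to supply its full quota of $s$'s --- otherwise an extra block must be tapped, forcing an extra $t$ by Lemma 1 and overshooting the $t$-degree $j$ --- your version of the lemma would not support the rest of the proof. The fix is exactly the paper's case analysis of the four maximum selections. Finally, your proposed sign computation (that the two surviving selections contribute with the same sign) is not needed for this lemma, whose content is that the \emph{eliminated rows and columns}, hence the leftover minor, are the same for both; it is, however, relevant to the later ``factor of $2^{b_j}$'' step, where the paper leaves it implicit, so carrying it out would be a worthwhile addition rather than part of this proof.
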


\begin{proof}
For a type I Lego block, no two $s$ share a row or column, therefore you can choose every $s$ within it and there is clearly only one way to choose every  $s$.

For a type II Lego block, out of the 4 possible ways to choose $2r-2$ $s$'s, only two ways will yield a remaining $\lambda$. (When exactly 1 out of the 2 possible $s$ are chosen from the upper right $r \times r$ minor). However both ways still result in the same rows and columns being eliminated and will therefore still result in the same minor being left at the end. Only the first column and $r+1$-th row will not be eliminated (as the 2 diagonal elements that share either its row or column are eliminated by $s$ that are always chosen, and there are no $s$ in the first column or $r+1$th row).

For an example, in the $(3,1)$, $r=1$ case, either:

\tikzset{
    ncbar angle/.initial=90,
    ncbar/.style={
        to path=(\tikztostart)
        -- ($(\tikztostart)!#1!\pgfkeysvalueof{/tikz/ncbar angle}:(\tikztotarget)$)
        -- ($(\tikztotarget)!($(\tikztostart)!#1!\pgfkeysvalueof{/tikz/ncbar angle}:(\tikztotarget)$)!\pgfkeysvalueof{/tikz/ncbar angle}:(\tikztostart)$)
        -- (\tikztotarget)
    },
    ncbar/.default=0.5cm,
}

\tikzset{square left brace/.style={ncbar=0.1cm}}
\tikzset{square right brace/.style={ncbar=-0.1cm}}

\[
\begin{tikzpicture}
    \matrix (m) [%
      matrix of nodes,
      text width=5mm,
      text badly centered
    ] {%
      $\lambda$ & 1 & 0 & 1\\
      0 & $\lambda$ & -1 & 0\\
      0 & 0 & $\lambda$ & 0\\
      0 & 0 & -1 & $\lambda$\\
    };
    \draw[thick] (m-1-2.north) -- (m-4-2.south);
    \draw[thick] (m-1-3.north) -- (m-4-3.south);
    \draw[thick] (m-1-4.north) -- (m-4-4.south);
    \draw[thick] (m-1-1.west) -- (m-1-4.east);
    \draw[thick] (m-2-1.west) -- (m-2-4.east);
    \draw[thick] (m-4-1.west) -- (m-4-4.east);
    \draw (m-1-2.south west) rectangle (m-1-2.north east);
    \draw (m-2-3.south west) rectangle (m-2-3.north east);
    \draw (m-4-4) ellipse (2mm and 2mm);
    \draw [red] (m-3-1) ellipse (2mm and 2mm);
    \draw [thick] (-1.2,0.9) to [square right brace ] (-1.2,-0.9);
    \draw [thick] (1.2,0.9) to [square left brace ] (1.2,-0.9);
  \end{tikzpicture}
\]

\noindent or

\[
\begin{tikzpicture}
    \matrix (m) [%
      matrix of nodes,
      text width=5mm,
      text badly centered
    ] {%
      $\lambda$ & 1 & 0 & 1\\
      0 & $\lambda$ & -1 & 0\\
      0 & 0 & $\lambda$ & 0\\
      0 & 0 & -1 & $\lambda$\\
    };
    \draw[thick] (m-1-2.north) -- (m-4-2.south);
    \draw[thick] (m-1-3.north) -- (m-4-3.south);
    \draw[thick] (m-1-4.north) -- (m-4-4.south);
    \draw[thick] (m-1-1.west) -- (m-1-4.east);
    \draw[thick] (m-2-1.west) -- (m-2-4.east);
    \draw[thick] (m-4-1.west) -- (m-4-4.east);
    \draw (m-1-4.south west) rectangle (m-1-4.north east);
    \draw (m-4-3.south west) rectangle (m-4-3.north east);
    \draw (m-2-2) ellipse (2mm and 2mm);
    \draw [red] (m-3-1) ellipse (2mm and 2mm);
    \draw [thick] (-1.2,0.9) to [square right brace ] (-1.2,-0.9);
    \draw [thick] (1.2,0.9) to [square left brace ] (1.2,-0.9);
  \end{tikzpicture}
\]

\noindent and then for the full partition [5,3,1,1], with Lego set (5,1),(3,1) the four possibilities are:

\[
\begin{tikzpicture}
    \matrix (m) [
      matrix of nodes,
      text width=5mm,
      text badly centered
    ] {
      $\lambda$ & 1 & 0 & 0 & 0 & 0 & 0 & 0 & 0 & 0\\
      0 & $\lambda$ & 1 & 0 & 0 & 0 & 0 & 1 & 0 & 0\\
      0 & 0 & $\lambda$ & 0 & 0 & 0 & -1 & 0 & 0 & 0\\
      0 & 0 & 0 & $\lambda$ & 1 & 0 & 0 & 0 & 0 & 1\\
      0 & 0 & 0 & 0 & $\lambda$ & 0 & 0 & 0 & -1 & 0\\
      0 & 0 & 0 & 0 & 0 & $\lambda$ & 0 & 0 & 0 & 0\\
      0 & 0 & 0 & 0 & 0 & -1 & $\lambda$ & 0 & 0 & 0\\
      0 & 0 & 0 & 0 & 0 & 0 & -1 & $\lambda$ & 0 & 0\\
      0 & 0 & 0 & 0 & 0 & 0 & 0 & 0 & $\lambda$ & 0\\
      0 & 0 & 0 & 0 & 0 & 0 & 0 & 0 & -1 & $\lambda$\\
    };

    \draw[thick] (m-1-2.north) -- (m-10-2.south);
    \draw[thick] (m-1-3.north) -- (m-10-3.south);
    \draw[thick] (m-1-5.north) -- (m-10-5.south);
    \draw[thick] (m-1-6.north) -- (m-10-6.south);
    \draw[thick] (m-1-7.north) -- (m-10-7.south);
    \draw[thick] (m-1-8.north) -- (m-10-8.south);
    \draw[thick] (m-1-9.north) -- (m-10-9.south);
    \draw[thick] (m-1-10.north) -- (m-10-10.south);
    
    \draw[thick] (m-1-1.west) -- (m-1-10.east);
    \draw[thick] (m-2-1.west) -- (m-2-10.east);
    \draw[thick] (m-3-1.west) -- (m-3-10.east);
    \draw[thick] (m-4-1.west) -- (m-4-10.east);
    \draw[thick] (m-5-1.west) -- (m-5-10.east);
    \draw[thick] (m-7-1.west) -- (m-7-10.east);
    \draw[thick] (m-8-1.west) -- (m-8-10.east);
    \draw[thick] (m-10-1.west) -- (m-10-10.east);
    
    \draw (m-1-2.south west) rectangle (m-1-2.north east);
    \draw (m-2-3.south west) rectangle (m-2-3.north east);
    \draw (m-3-7.south west) rectangle (m-3-7.north east);
    \draw (m-4-5.south west) rectangle (m-4-5.north east);
    \draw (m-5-9.south west) rectangle (m-5-9.north east);
    \draw (m-7-6.south west) rectangle (m-7-6.north east);

    \draw (m-8-8) ellipse (2mm and 2mm);
    \draw (m-10-10) ellipse (2mm and 2mm);

    \draw [red] (m-6-1) ellipse (2mm and 2mm);
    \draw [red] (m-9-1) ellipse (2mm and 2mm);
    \draw [red] (m-6-4) ellipse (2mm and 2mm);
    \draw [red] (m-9-4) ellipse (2mm and 2mm);

    \draw [thick] (-3.5,2.6) to [square right brace ] (-3.5,-2.6);
    \draw [thick] (3.5,2.6) to [square left brace ] (3.5, -2.6);
  \end{tikzpicture}
\]

\[
\begin{tikzpicture}
    \matrix (m) [
      matrix of nodes,
      text width=5mm,
      text badly centered
    ] {
      $\lambda$ & 1 & 0 & 0 & 0 & 0 & 0 & 0 & 0 & 0\\
      0 & $\lambda$ & 1 & 0 & 0 & 0 & 0 & 1 & 0 & 0\\
      0 & 0 & $\lambda$ & 0 & 0 & 0 & -1 & 0 & 0 & 0\\
      0 & 0 & 0 & $\lambda$ & 1 & 0 & 0 & 0 & 0 & 1\\
      0 & 0 & 0 & 0 & $\lambda$ & 0 & 0 & 0 & -1 & 0\\
      0 & 0 & 0 & 0 & 0 & $\lambda$ & 0 & 0 & 0 & 0\\
      0 & 0 & 0 & 0 & 0 & -1 & $\lambda$ & 0 & 0 & 0\\
      0 & 0 & 0 & 0 & 0 & 0 & -1 & $\lambda$ & 0 & 0\\
      0 & 0 & 0 & 0 & 0 & 0 & 0 & 0 & $\lambda$ & 0\\
      0 & 0 & 0 & 0 & 0 & 0 & 0 & 0 & -1 & $\lambda$\\
    };

    \draw[thick] (m-1-2.north) -- (m-10-2.south);
    \draw[thick] (m-1-3.north) -- (m-10-3.south);
    \draw[thick] (m-1-5.north) -- (m-10-5.south);
    \draw[thick] (m-1-6.north) -- (m-10-6.south);
    \draw[thick] (m-1-7.north) -- (m-10-7.south);
    \draw[thick] (m-1-8.north) -- (m-10-8.south);
    \draw[thick] (m-1-9.north) -- (m-10-9.south);
    \draw[thick] (m-1-10.north) -- (m-10-10.south);
    
    \draw[thick] (m-1-1.west) -- (m-1-10.east);
    \draw[thick] (m-2-1.west) -- (m-2-10.east);
    \draw[thick] (m-3-1.west) -- (m-3-10.east);
    \draw[thick] (m-4-1.west) -- (m-4-10.east);
    \draw[thick] (m-5-1.west) -- (m-5-10.east);
    \draw[thick] (m-7-1.west) -- (m-7-10.east);
    \draw[thick] (m-8-1.west) -- (m-8-10.east);
    \draw[thick] (m-10-1.west) -- (m-10-10.east);
    
    \draw (m-1-2.south west) rectangle (m-1-2.north east);
    \draw (m-2-8.south west) rectangle (m-2-8.north east);
    \draw (m-8-7.south west) rectangle (m-8-7.north east);
    \draw (m-4-5.south west) rectangle (m-4-5.north east);
    \draw (m-5-9.south west) rectangle (m-5-9.north east);
    \draw (m-7-6.south west) rectangle (m-7-6.north east);

    \draw (m-3-3) ellipse (2mm and 2mm);
    \draw (m-10-10) ellipse (2mm and 2mm);

    \draw [red] (m-6-1) ellipse (2mm and 2mm);
    \draw [red] (m-9-1) ellipse (2mm and 2mm);
    \draw [red] (m-6-4) ellipse (2mm and 2mm);
    \draw [red] (m-9-4) ellipse (2mm and 2mm);

    \draw [thick] (-3.5,2.6) to [square right brace ] (-3.5,-2.6);
    \draw [thick] (3.5,2.6) to [square left brace ] (3.5, -2.6);

  \end{tikzpicture}
\]

\[
\begin{tikzpicture}
    \matrix (m) [
      matrix of nodes,
      text width=5mm,
      text badly centered
    ] {
      $\lambda$ & 1 & 0 & 0 & 0 & 0 & 0 & 0 & 0 & 0\\
      0 & $\lambda$ & 1 & 0 & 0 & 0 & 0 & 1 & 0 & 0\\
      0 & 0 & $\lambda$ & 0 & 0 & 0 & -1 & 0 & 0 & 0\\
      0 & 0 & 0 & $\lambda$ & 1 & 0 & 0 & 0 & 0 & 1\\
      0 & 0 & 0 & 0 & $\lambda$ & 0 & 0 & 0 & -1 & 0\\
      0 & 0 & 0 & 0 & 0 & $\lambda$ & 0 & 0 & 0 & 0\\
      0 & 0 & 0 & 0 & 0 & -1 & $\lambda$ & 0 & 0 & 0\\
      0 & 0 & 0 & 0 & 0 & 0 & -1 & $\lambda$ & 0 & 0\\
      0 & 0 & 0 & 0 & 0 & 0 & 0 & 0 & $\lambda$ & 0\\
      0 & 0 & 0 & 0 & 0 & 0 & 0 & 0 & -1 & $\lambda$\\
    };

    \draw[thick] (m-1-2.north) -- (m-10-2.south);
    \draw[thick] (m-1-3.north) -- (m-10-3.south);
    \draw[thick] (m-1-5.north) -- (m-10-5.south);
    \draw[thick] (m-1-6.north) -- (m-10-6.south);
    \draw[thick] (m-1-7.north) -- (m-10-7.south);
    \draw[thick] (m-1-8.north) -- (m-10-8.south);
    \draw[thick] (m-1-9.north) -- (m-10-9.south);
    \draw[thick] (m-1-10.north) -- (m-10-10.south);
    
    \draw[thick] (m-1-1.west) -- (m-1-10.east);
    \draw[thick] (m-2-1.west) -- (m-2-10.east);
    \draw[thick] (m-3-1.west) -- (m-3-10.east);
    \draw[thick] (m-4-1.west) -- (m-4-10.east);
    \draw[thick] (m-5-1.west) -- (m-5-10.east);
    \draw[thick] (m-7-1.west) -- (m-7-10.east);
    \draw[thick] (m-8-1.west) -- (m-8-10.east);
    \draw[thick] (m-10-1.west) -- (m-10-10.east);
    
    \draw (m-1-2.south west) rectangle (m-1-2.north east);
    \draw (m-2-3.south west) rectangle (m-2-3.north east);
    \draw (m-3-7.south west) rectangle (m-3-7.north east);
    \draw (m-4-10.south west) rectangle (m-4-10.north east);
    \draw (m-10-9.south west) rectangle (m-10-9.north east);
    \draw (m-7-6.south west) rectangle (m-7-6.north east);

    \draw (m-8-8) ellipse (2mm and 2mm);
    \draw (m-5-5) ellipse (2mm and 2mm);

    \draw [red] (m-6-1) ellipse (2mm and 2mm);
    \draw [red] (m-9-1) ellipse (2mm and 2mm);
    \draw [red] (m-6-4) ellipse (2mm and 2mm);
    \draw [red] (m-9-4) ellipse (2mm and 2mm);

    \draw [thick] (-3.5,2.6) to [square right brace ] (-3.5,-2.6);
    \draw [thick] (3.5,2.6) to [square left brace ] (3.5, -2.6);
  \end{tikzpicture}
\]

\[
\begin{tikzpicture}
    \matrix (m) [
      matrix of nodes,
      text width=5mm,
      text badly centered
    ] {
      $\lambda$ & 1 & 0 & 0 & 0 & 0 & 0 & 0 & 0 & 0\\
      0 & $\lambda$ & 1 & 0 & 0 & 0 & 0 & 1 & 0 & 0\\
      0 & 0 & $\lambda$ & 0 & 0 & 0 & -1 & 0 & 0 & 0\\
      0 & 0 & 0 & $\lambda$ & 1 & 0 & 0 & 0 & 0 & 1\\
      0 & 0 & 0 & 0 & $\lambda$ & 0 & 0 & 0 & -1 & 0\\
      0 & 0 & 0 & 0 & 0 & $\lambda$ & 0 & 0 & 0 & 0\\
      0 & 0 & 0 & 0 & 0 & -1 & $\lambda$ & 0 & 0 & 0\\
      0 & 0 & 0 & 0 & 0 & 0 & -1 & $\lambda$ & 0 & 0\\
      0 & 0 & 0 & 0 & 0 & 0 & 0 & 0 & $\lambda$ & 0\\
      0 & 0 & 0 & 0 & 0 & 0 & 0 & 0 & -1 & $\lambda$\\
    };

    \draw[thick] (m-1-2.north) -- (m-10-2.south);
    \draw[thick] (m-1-3.north) -- (m-10-3.south);
    \draw[thick] (m-1-5.north) -- (m-10-5.south);
    \draw[thick] (m-1-6.north) -- (m-10-6.south);
    \draw[thick] (m-1-7.north) -- (m-10-7.south);
    \draw[thick] (m-1-8.north) -- (m-10-8.south);
    \draw[thick] (m-1-9.north) -- (m-10-9.south);
    \draw[thick] (m-1-10.north) -- (m-10-10.south);
    
    \draw[thick] (m-1-1.west) -- (m-1-10.east);
    \draw[thick] (m-2-1.west) -- (m-2-10.east);
    \draw[thick] (m-3-1.west) -- (m-3-10.east);
    \draw[thick] (m-4-1.west) -- (m-4-10.east);
    \draw[thick] (m-5-1.west) -- (m-5-10.east);
    \draw[thick] (m-7-1.west) -- (m-7-10.east);
    \draw[thick] (m-8-1.west) -- (m-8-10.east);
    \draw[thick] (m-10-1.west) -- (m-10-10.east);
    
    \draw (m-1-2.south west) rectangle (m-1-2.north east);
    \draw (m-2-8.south west) rectangle (m-2-8.north east);
    \draw (m-8-7.south west) rectangle (m-8-7.north east);
    \draw (m-4-10.south west) rectangle (m-4-10.north east);
    \draw (m-10-9.south west) rectangle (m-10-9.north east);
    \draw (m-7-6.south west) rectangle (m-7-6.north east);

    \draw (m-3-3) ellipse (2mm and 2mm);
    \draw (m-5-5) ellipse (2mm and 2mm);

    \draw [red] (m-6-1) ellipse (2mm and 2mm);
    \draw [red] (m-9-1) ellipse (2mm and 2mm);
    \draw [red] (m-6-4) ellipse (2mm and 2mm);
    \draw [red] (m-9-4) ellipse (2mm and 2mm);

    \draw [thick] (-3.5,2.6) to [square right brace ] (-3.5,-2.6);
    \draw [thick] (3.5,2.6) to [square left brace ] (3.5, -2.6);
  \end{tikzpicture}
\]

\noindent each leaving the same $2 \times 2$ minor.

\end{proof}

Now by Lemma 2, for each tapped Lego block, at least one row/column must contribute a $t$. Therefore, as the degree of $t$ is $j$ in the term we are interested in, if more than $j$ blocks are tapped we end up with a power of $t$ that is too high.

Recall that $2k$ is defined to be the sum of the first $j$ elements of the partition.

Let  $L_\beta$ denote the $\beta$-th element of the Lego set $L$, and let $s_\beta$ be the dimension of the associated minor of $L_\beta$, i.e. $s_\beta = r$ for Type I, and $s_\beta = 2r$ for Type II. 

Let $b_\beta$ be the number of type II Lego blocks among the first $\beta$ blocks. 

Note that there are $2m-b_{2m}$ Lego blocks. However $j>2m-b_{2m}$ only if $j=2m$, in which case the conjecture is already known. For the rest of this proof we assume $j\leq 2m-b_{2m}$.

Now note that $2k = (\sum_{\beta=1}^{j} s_\beta) - b_{j}$. Each Type I has at most $2r-1$ occurrences of $s$ that can be chosen at once, and each Type II has at most $2r-2$ occurrences of $s$ that can be chosen at once. Therefore the sum of the number of occurrences of $s$ in the first $j$ pairs, is:

\[(\sum_{\beta=1}^j s_\beta)-j-b_j = 2k-j.\] 

\noindent (We subtract once the number of type 1 pairs, and twice the number of type 2 pairs). This is the exact number of $s$'s needed. 

The optimal solution to choose such Lego blocks is clearly given by the greedy algorithm, where the largest such blocks are selected one at a time.
This is because replacing any block with a smaller block will clearly leave fewer $s$'s chosen, and therefore will require an additional block to be tapped. Filling a block only partially will not help, as there would be leftovers which must go into a separate block.

\begin{lemma}
The determinant of the final minor left, after eliminating the maximum possible number of rows and columns from the associated minors of the first $j$ Lego blocks, is $\pm$ a square.
\end{lemma}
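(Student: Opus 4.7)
The plan is to show that, after the greedy $s$-elimination together with the further contraction of the $\lambda$-diagonal positions described in Lemma~2, the ``final minor'' reduces to a $j \times j$ submatrix of $tg$ whose row-index set $R''$ and column-index set $C''$ are exchanged by the $\mathfrak{so}(2N)$ involution $\iota: i \leftrightarrow i + N$. Its determinant is then, up to sign, $t^j$ times the square of a Pfaffian; since $j$ is even this is $\pm$ a square of a polynomial on the Lie algebra.

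First I would formalize what gets eliminated. Each tapped Type~II block contributes exactly one index to $D := R' \cap C'$ (the matched row/column pair absorbed into a $\lambda$), while each tapped Type~I pair contributes none, its leftover rows $\{r, r+1\}$ and columns $\{1, 2r\}$ in block coordinates being disjoint as index sets. Combined with the full diagonal of each untapped block, this gives $|D| = \sum_{\beta > j} s_\beta + b_j = 2(N-k)$, using the identity $2k = \sum_{\beta\leq j} s_\beta - b_j$. Consequently the non-$\lambda$ core $R'' := R' \setminus D$, $C'' := C' \setminus D$ has size exactly $j$, is drawn entirely from the tapped blocks, and has entries that are pure $tg$-terms, contributing a global factor of $t^j$ to the determinant.

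The structural heart of the argument is to verify $C'' = \iota(R'')$. Under the standard $\mathfrak{so}(2N)$ placement, each block $X^\alpha$'s top half occupies some contiguous range $\{a, a+1, \dots, a+n-1\}$ of top indices and its bottom half sits at $\{a+N, \dots, a+N+n-1\}$, so the block's internal top/bottom swap agrees with the global $\iota$. For a tapped Type~II block the single leftover non-$\lambda$ row is the first bottom row $a+N$ and the single leftover non-$\lambda$ column is the first top column $a$, which $\iota$ exchanges. For a tapped Type~I pair the leftover rows $\{a+r-1, a+N\}$ and the leftover columns $\{a, a+N+r-1\}$ are also swapped by $\iota$ (with $\iota$ interchanging the contributions from the $+$ and $-$ Legos of the pair). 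Taking the union over tapped Legos yields $\iota(R'') = C''$ globally.

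With $\iota(R'') = C''$ established, define $\hat g$ by $\hat g_{i,j} := g_{i,\iota(j)}$. A short four-case check, using the antisymmetry of the off-diagonal blocks $B, C$ together with the $A \leftrightarrow -A^t$ pairing, shows that $\hat g$ is honestly antisymmetric. Since $\iota$ identifies the row and column index sets, the core submatrix $g_{R'', C''}$ coincides, up to a column reordering of definite sign, with the principal submatrix $\hat g_{R'', R''}$, and so $\det(g_{R'', C''}) = \pm \mathrm{Pf}(\hat g_{R'', R''})^2$. Multiplying by the overall $t^j$ gives the determinant of the final minor as $\pm(t^{j/2}\mathrm{Pf}(\hat g_{R'', R''}))^2$, which is $\pm$ a square. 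The main obstacle will be the block-by-block verification of $C'' = \iota(R'')$, in particular ensuring that Lemma~2's block-local description of the leftover row and column is consistent with the global $\mathfrak{so}(2N)$-compatible placement of the diagonal blocks inside the full $2N \times 2N$ matrix.
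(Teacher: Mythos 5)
Your proposal is correct and takes essentially the same route as the paper: the paper's proof also rests on the single observation that eliminated (hence remaining) rows and columns are matched under the involution $i \leftrightarrow i+N$, so the leftover minor retains the form $\bigl[\begin{smallmatrix} A & B \\ C & -A^t \end{smallmatrix}\bigr]$ (with $A$ possibly rectangular) and becomes antisymmetric after column swaps, whence its determinant is $\pm$ a square of a Pfaffian. Your further contraction of the $\lambda$-diagonal down to a $j\times j$ pure-$g$ core (and the appeal to $j$ being even) is just a more explicit bookkeeping of the same mechanism, which the paper leaves implicit.
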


\begin{proof}
Whenever row $i$ is eliminated, so is column $i\pm n$ and vice versa. Therefore the minor remains in the form:

\[\begin{bmatrix}A& B\\ C& -A^t\end{bmatrix},\]

\noindent though $A$ might now be rectangular. Then note that this form can be converted to an anti-symmetric matrix by swapping columns, and therefore its determinant is $\pm$ a square.

\end{proof}

For each type II Lego block there are 2 ways to select which $s$ are chosen. Both choices result in the same minor, so they introduce one factor of 2 per type II Lego block. As there is always an even number of type II Lego blocks, the full number of ways to select $s$ among all tapped Lego blocks will always be a power of 4.

So letting $z$ be the remaining minor of the matrix $X$, the full coefficient we are looking for is:

\[\pm{2^{b_j}det(z)},\]
with $b_j$ an even integer and $det(z)$ a square, as claimed.
\qed

\section*{Acknowledgements}
During the preparation of this work, RD's research was supported in part by NSF grants DMS 2001673 and 2401422, by NSF FRG grant DMS 2244978, and by Simons HMS Collaboration grant 390287. 
The work of JD was supported in part by the National Science Foundation under Grant No.~PHY--2210562.
\bibliographystyle{utphys}
\bibliography{refs}

\end{document}